\newtheorem{thm}{Theorem}[section]
\newtheorem{lem}[thm]{Lemma}
\newtheorem{prop}[thm]{Proposition}
\newtheorem{cor}[thm]{Corollary}
\newtheorem{defn}[thm]{Definition}
\theoremstyle{remark}
\newtheorem{rem}[thm]{Remark}
\newtheorem{que}[thm]{Question}
\newtheorem{exam}[thm]{Example}
\def \N {\mathbb N}
\def \K {\mathcal K}
\def \Z {\mathbb Z}
\def \R {\mathcal R}
\def \Q {\mathcal Q}
\def \U {\mathcal U}
\def \M {\mathcal M}
\def \P {\mathcal P}
\def \F {\mathcal F}
\def \U {\mathcal U}
\def \V {\mathcal V}
\def \htop{h_{\mathsf{top}}}
\def \Htop{H_{\mathsf{top}}}
\def \sq {sequence}
\def \tl {topological}
\newcommand{\card}[1]{\left|{#1}\right|}
\newcommand{\sd}{\triangle}
\numberwithin{equation}{section}
\author{Tomasz Downarowicz, Bartosz Frej and Pierre-Paul Romagnoli}
\title[Shearer's inequality and Infimum Rule]{Shearer's inequality and Infimum Rule for Shannon entropy and topological entropy}
\thanks{The research of the first named two authors is supported by the NCN (National Science Center, Poland) grant 2013/08/A/ST1/00275. The third named author acknowledges the support of 
Programa Basal PFB 03, CMM, Universidad de Chile. Part of the research was conducted during the 
visit of the first named author at CMM, Universidad de Chile.}
\begin{document}

\begin{abstract}
We review subbadditivity properties of Shannon entropy, in particular, from the Shearer's inequality
we derive the ``infimum rule'' for actions of amenable groups. We briefly discuss applicability of the ``infimum formula'' to actions of other groups. Then we pass to topological entropy of a cover. We prove
Shearer's inequality for disjoint covers and give counterexamples otherwise. We also prove that, for actions of amenable groups, the supremum over all open covers of the ``infimum fomula'' gives correct 
value of \tl\ entropy.
\end{abstract}

\subjclass[2010]{37A35, 37B40}

\keywords{Subadditivity, strong subadditivity, Shearer's inequality, infimum rule, Shannon entropy, \tl\ entropy}

\maketitle

\centerline{February 24, 2015}

\section{Introduction}
This note is devoted to properties of entropy, both measure-theoretic and \tl, treated as a function defined on subsets of the acting group. One such property, called subadditivity, is popularly known and used. It implies, in particular, that when evaluating the dynamical entropy (of a partition or of an open cover) in an action of $\Z$, we can exchangeably apply $\limsup$, $\liminf$, $\lim$ or $\inf$, of the terms $\frac1n H(\{1,2,\dots,n\})$ (where $H(\{1,2,\dots,n\})$ is appropriately understood), simply because the \sq\ $\frac1n H(\{1,2,\dots,n\})$ converges to its infimum. Similar statement holds for actions of countable amenable groups, with $\{1,2,\dots,n\}$ replaced by elements $F_n$ of a F\o lner \sq\ (and $\frac1n$ replaced by $\frac1{|F_n|}$). But the measure-theoretic entropy fulfills a stronger property, called strong subadditivity, which implies that the same value will be obtained when taking infimum of the terms $\frac1{|F|}H(F)$ over all finite subsets $F$ of the acting group. Notice that this ``infimum rule'' allows to define (and evaluate) the entropy of a measure-preserving action of an amenable group without referring to any F\o lner sequence. In other words, the simple form $\inf_F \frac1{|F|}H(F)$ does not depend upon amenability of the group and can be used as a definition of entropy of processes in actions of arbitrary groups. A natural question arises: is this definition good (i.e., does it fulfill desirable postulates one expects from a reasonable notion of entropy)? We will briefly discuss this question in the section \emph{Beyond amenability}.

For the moment, it is not known whether a similar ``infimum rule'' applies to \tl\ entropy of an open cover (even for the actions of $\Z$). As one of our examples shows, the corresponding function is not strongly subadditive, but this does not determine that the infimum rule fails. In fact, this rule is implied by a property lying between subadditivity and strong subadditivity, called Shearer's inequality. We will show that if the open cover consists of disjoint sets, Shearer's inequality holds, hence, for actions of countable amenable groups, the infimum rule does work. For non-disjoint covers, we show that Shearer's inequality fails. Moreover, we and give an example of a $\Z_3$-action in which the infimum does not hold. We do not have an analogous example for $\Z$---this open problem seems to be difficult. On the other hand, we prove that for amenable groups  the supremum over all open covers of the ``infimum formula'' does yield the correct value of \tl\ entropy of the action.
\smallskip

We remark that all, presented in this note, results concerning measure-theoretic entropy are well known. Novel are only the results concerning \tl\ entropy (in particular the examples). 

%                   ---------------------------------  o  ---------------------------------

\section{Subadditivity and related notions}
Let $G$ be an abstract set and let $\F(G)$ be the collection of all nonempty finite subsets of $G$.
A \emph{$k$-cover} of a set $F\in\F(G)$ is a family $\{K_1,K_2,\dots,K_r\}$ of elements of $\F(G)$
(with possible repetitions) such that each element of $F$ belongs to $K_i$ for at least $k$ indices $i\in\{1,2,\dots,r\}$. With slight abuse of precision, we will say ``belongs to at least $k$ elements of $\K$'' and the sums and products over $i\in\{1,2,\dots,r\}$ will be indexed by $K\in\K$ (we must remember that repeated terms $K$ are counted separately).
%A \emph{uniform $k$-cover} is a $k$-cover in which each element of $F$ lies in exactly $k$ sets $F_i$.

Let $H$ be a nonnegative real function with domain $\F(G)$. 
\begin{defn}
We say that:
\begin{enumerate}
	\item[(M)] $H$ is \emph{monotone} if $F\subset F'$ implies $H(F)\leq H(F')$;
	\item[(S)] $H$ is \emph{subadditive} if for any $F,F'$ it holds that 
	\[
	H(F\cup F')\leq H(F) + H(F');
	\]
	\item[(Sh)] $H$ satisfies \emph{Shearer's inequality} if for any $F$ and any 
	%uniform 
	$k$-cover $\K$ of $F$,
	\[
	H(F) \leq \frac1k \sum_{K\in\K} H(K);
	\]
	\item[(SS)] $H$ is \emph{strongly subadditive} if for any $F,F'\subset G$,
	\[
	H(F\cup F')\leq H(F) + H(F') - H(F\cap F').
	\]
\end{enumerate}
\end{defn}
Let us define the \emph{conditional value of $H$} by the formula
\begin{equation}	\label{conditional}
H(F|F')=H(F\cup F') - H(F').
\end{equation}

Using the conditional value we introduce two more notions:
\begin{enumerate}
	\item[(MC)] $H$ is \emph{monotone wrt. the condition} if $F'\subset F''$ implies $H(F|F')\geq H(F|F'')$,
	\item[(CS)] $H$ is \emph{conditionally subadditive} if for any $F,F',F''$ it holds that 
	\[
	H(F\cup F'|F'')\leq H(F|F'') + H(F'|F'').
	\]
\end{enumerate}
\begin{rem}
If we extend $H$ by setting $H(\emptyset) = 0$, then $H(F|\emptyset) = H(F)$ for
every $F\in\F(G)$ and monotonicity wrt. the condition includes that $H(F|F')\le H(F)$.
\end{rem}

\begin{lem}
If $H$ is monotone then the conditions (SS), (MC) and (CS) are equivalent.
\end{lem}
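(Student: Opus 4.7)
The plan is to establish the cycle (SS) $\Rightarrow$ (CS) $\Rightarrow$ (MC) $\Rightarrow$ (SS) by direct algebraic manipulation, using the definition $H(F|F') = H(F\cup F') - H(F')$ throughout. Monotonicity is needed only in the step (SS) $\Rightarrow$ (CS); the other two implications go through without it, but the monotonicity hypothesis unifies the three statements into a single equivalence class.

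For (SS) $\Rightarrow$ (CS): expanding the conditional values, (CS) is equivalent to
\[
H(F\cup F'\cup F'') + H(F'') \le H(F\cup F'') + H(F'\cup F'').
\]
I would obtain this by applying (SS) to $A = F\cup F''$ and $B = F'\cup F''$, which gives $H(A\cup B) + H(A\cap B) \le H(A) + H(B)$ with $A\cup B = F\cup F'\cup F''$ and $A\cap B = (F\cap F')\cup F'' \supset F''$; monotonicity then allows one to replace $H(A\cap B)$ by the smaller quantity $H(F'')$. For (CS) $\Rightarrow$ (MC): given $F'\subset F''$, apply (CS) to the sets $F,F''$ conditioned on $F'$. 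Because $F'\subset F''$, the unions inside $H(F\cup F''|F')$ and $H(F''|F')$ collapse to $F\cup F''$ and $F''$ respectively, and after cancellation the inequality reduces to $H(F|F'')\le H(F|F')$, which is precisely (MC). For (MC) $\Rightarrow$ (SS): since $F\cap F'\subset F'$, applying (MC) with ``numerator'' $F$ gives $H(F|F\cap F')\ge H(F|F')$; expanding both sides using the definition of the conditional value (noting $F\cap F'\subset F$, so $H(F|F\cap F') = H(F) - H(F\cap F')$) yields (SS) after rearrangement.

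The only nonroutine choice in the whole argument is the substitution $A = F\cup F''$, $B = F'\cup F''$ in the first step---this is the step that pairs the conditional form (CS) with the unconditional strong subadditivity (SS) in a symmetric way. Once this is spotted, the rest is mechanical, and I do not anticipate any real obstacle. Monotonicity enters precisely to absorb the nonnegative difference $H((F\cap F')\cup F'') - H(F'')$, which is the only gap that would otherwise prevent (SS) from immediately yielding (CS).
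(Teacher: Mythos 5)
Your proof is correct. You traverse the cycle in the opposite direction from the paper: you prove (SS) $\Rightarrow$ (CS) $\Rightarrow$ (MC) $\Rightarrow$ (SS), whereas the paper proves (SS) $\Rightarrow$ (MC) $\Rightarrow$ (CS) $\Rightarrow$ (SS). The underlying mechanism is the same in both cases --- a chain of implications obtained by expanding the conditional value $H(F|F')=H(F\cup F')-H(F')$, with monotonicity invoked exactly once, in the step leaving (SS), to absorb a superfluous term. The difference lies in where that absorption happens: the paper applies (SS) to the pair $F\cup F'$ and $F''$ and uses monotonicity to replace $H((F\cup F')\cap F'')$ by $H(F')$, obtaining (MC) directly; you apply (SS) to the symmetric pair $F\cup F''$, $F'\cup F''$ and use monotonicity to replace $H((F\cap F')\cup F'')$ by $H(F'')$, obtaining (CS) directly. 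Your substitution is arguably the more natural one, since it exhibits (CS) as literally the strong subadditivity inequality for two sets both containing the conditioning set $F''$; the paper's order has the mild advantage that the step (MC) $\Rightarrow$ (CS) is a one-line telescoping identity. All three of your steps check out (in particular, $(F\cup F'')\cap(F'\cup F'')=(F\cap F')\cup F''\supset F''$ is correct, as is the collapse $H(F|F\cap F')=H(F)-H(F\cap F')$), so there is no gap.
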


\begin{proof} (SS) $\implies$ (MC): For $F'\subset F''$,
\begin{multline*}
H(F|F'') = H(F \cup F'') - H(F'') = H(F \cup F' \cup F'') - H(F'') \\
	\leq H(F \cup F') + H(F'') - H((F \cup F') \cap F'') - H(F'')\\
	\leq H(F\cup F') - H(F') = H(F|F').
\end{multline*}
(MC) $\implies$ (CS) (monotonicity is not used):
\begin{multline*}
H(F\cup F'|F'') = H(F\cup F' \cup F'') - H(F'') + H(F'\cup F'') - H(F'\cup F'') \\
= H(F|F'\cup F'') + H(F'|F'')\leq H(F|F'') + H(F'|F'').
\end{multline*}
(CS) $\implies$ (SS) (monotonicity is not used):
\begin{multline*}
H(F\cup F') =  H(F\cup F' \cup (F\cap F')) = H(F\cup F'|F\cap F') + H(F\cap F')\\
\leq H(F|F\cap F') + H(F'|F\cap F') + H(F\cap F') = \\
H(F) - H(F\cap F') + H(F') - H(F\cap F') + H(F\cap F') = H(F) + H(F') - H(F\cap F').
\end{multline*}
\end{proof}

\begin{prop}	\label{SS_then_SI}
If $H$ is monotone then (SS) $\Rightarrow$ (Sh) $\Rightarrow$ (S) and none of the implications may be reversed.
\end{prop}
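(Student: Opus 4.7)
The plan is to prove (SS)$\Rightarrow$(Sh) by the usual chain rule manipulation built on the equivalent condition (MC), to deduce (Sh)$\Rightarrow$(S) as the case $k=1$ applied to the cover $\{F,F'\}$ of $F\cup F'$, and to realize both non-reversals on the three-point ground set $G=\{a,b,c\}$.

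For (SS)$\Rightarrow$(Sh), fix $F=\{x_1,\dots,x_n\}\in\F(G)$ in any ordering and put $F_j=\{x_1,\dots,x_j\}$ with $F_0=\emptyset$. Setting $H(\emptyset)=0$, the definition \eqref{conditional} gives the telescoping identity
\[
H(F)=\sum_{j=1}^{n}\bigl(H(F_j)-H(F_{j-1})\bigr)=\sum_{j=1}^{n}H(\{x_j\}\mid F_{j-1}).
\]
By the previous lemma, (SS) implies (MC). For each $K\in\K$ containing $x_j$, set $K_{<j}=K\cap F_{j-1}\subseteq F_{j-1}$; then (MC) yields $H(\{x_j\}\mid F_{j-1})\le H(\{x_j\}\mid K_{<j})$. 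Since each $x_j$ lies in at least $k$ members of $\K$, summing this estimate over those members, then over $j$, and switching the order of summation gives
\[
k\,H(F)\le\sum_{K\in\K}\ \sum_{j:\,x_j\in K}H(\{x_j\}\mid K_{<j})=\sum_{K\in\K}H(K\cap F)\le\sum_{K\in\K}H(K),
\]
where the middle equality is the same telescoping identity applied to $K\cap F$ in the induced enumeration and the last step is (M). This is Shearer's inequality. The implication (Sh)$\Rightarrow$(S) follows by taking $\K=\{F,F'\}$, which is a $1$-cover of $F\cup F'$.

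The two non-reversals are produced on $G=\{a,b,c\}$ by assigning $H=1$ to every nonempty proper subset of $G$. Setting $H(\{a,b,c\})=2$ preserves (M) and (S) (the subadditivity checks reduce to instances of $2\le 1+1$ or $1\le 1+1$), but the $2$-cover $\{\{a,b\},\{a,c\},\{b,c\}\}$ yields the Shearer bound $\tfrac12(1+1+1)=\tfrac32<2$, so (Sh) fails. Replacing the value on $\{a,b,c\}$ by $\tfrac32$ makes (SS) fail on $A=\{a,b\},\,B=\{a,c\}$, since $H(A\cup B)+H(A\cap B)=\tfrac32+1>1+1=H(A)+H(B)$, while (Sh) now survives.

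The main obstacle is verifying (Sh) in this second example against every $k$-cover and every target $F$. When $|F|\le 2$ the bound reduces to the observation that every nonempty value of $H$ is at least $1$ and any $k$-cover of $F$ contains at least $k$ sets meeting $F$. The case $F=\{a,b,c\}$ is the binding one, and I would dispatch it by an elementary linear estimate: letting $n_i$ count the sets of size $i$ in $\K$, the three coverage constraints sum to $n_1+2n_2+3n_3\ge 3k$, whence $\sum_{K\in\K}H(K)=n_1+n_2+\tfrac32 n_3\ge\tfrac12(n_1+2n_2+3n_3)\ge\tfrac32 k=k\,H(F)$, as required.
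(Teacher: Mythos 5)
Your treatment of the two implications is essentially the paper's own: the same telescoping decomposition of $H(F)$ into conditional values, the same appeal to the preceding lemma to replace (SS) by (MC), and the same term-by-term comparison of conditionings before summing over the $k$-cover. Your explicit reduction $\sum_{j:x_j\in K}H(\{x_j\}\mid K_{<j})=H(K\cap F)\le H(K)$ is in fact slightly tidier than the paper, which writes the chain-rule identity for $H(K)$ as though $K\subseteq F$; and for (Sh)$\Rightarrow$(S) your $1$-cover $\{F,F'\}$ does the same job as the paper's $\{F,F'\setminus F\}$ while avoiding the degenerate case $F'\subseteq F$. Where you genuinely depart from the paper is in the non-reversibility. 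The paper points to its dynamical examples in Section \ref{top_entropy}: the golden mean shift with the time-zero cover gives a monotone $H$ satisfying (Sh) but not (SS), and the six-point $\Z_3$-subshift with the non-disjoint cover $\bar\V$ gives one satisfying (S) but not (Sh). You instead build two abstract functions on the three-point set $\{a,b,c\}$, equal to $1$ on proper nonempty subsets and to $2$ (resp.\ $\tfrac32$) on the whole set. Both of your examples check out: monotonicity and subadditivity are immediate, the triangle $2$-cover $\{\{a,b\},\{a,c\},\{b,c\}\}$ defeats (Sh) when $H(G)=2$, the pair $\{a,b\},\{a,c\}$ defeats (SS) when $H(G)=\tfrac32$, and your counting estimate $n_1+n_2+\tfrac32 n_3\ge\tfrac12(n_1+2n_2+3n_3)\ge\tfrac32 k$ correctly disposes of the only binding case $F=G$ of Shearer's inequality. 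Your route is more elementary and entirely self-contained, which suits the abstract formulation of the proposition; what the paper's choice buys instead is the information that these failures already occur for the concrete function $F\mapsto\Htop(\U^F)$ that the rest of the paper studies, which no abstract example on a three-point set can show.
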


\begin{proof}
Assume that $H$ is strongly subadditive. Let $F=\{f_1,f_2,...,f_m\}$ and let $\K$ be a $k$-cover of $F$. Using (\ref{conditional}) we can write
\[
H(F) = H(\{f_1\}) + H(\{f_2\}|\{f_1\}) + H(\{f_3\}|\{f_1,f_2\}) + ... + H(\{f_m\}|\{f_1,...,f_{m-1}\})
\]
and similarly, for each $K\in\K$,
\[
H(K)= \sum_{\{j:f_j\in K\}} H\left(\{f_j\} | \{f_i\in K : i<j\}\right).
\]
By the preceding lemma, $H$ fulfills (MC), hence $H(\{f_j\} | \{f_i\in K : i<j\})\geq H(\{f_j\} | \{f_1,...,f_{j-1}\})$.
Since each $f_j$ belongs to at least $k$ elements of $\K$, summing over $\K$, we obtain
\[
\sum_{K\in\K}H(K) \geq \sum_{j=1}^m kH(\{f_j\} | \{f_1,...,f_{j-1}\}) = kH(F).
\]

For the proof of (Sh) $\Rightarrow$ (S) note that $\{F, F'\setminus F\}$ is a $1$-cover of $F\cup F'$, hence $H(F\cup F') \leq H(F) + H(F'\setminus F) \leq H(F) + H(F')$. 
\medskip

For counterexamples we ask the reader to see section \ref{top_entropy}. In Example~\ref{golden_mean}  we show that topological entropy of the standard time-zero partition (which is also a cover) in the golden mean shift is not strongly subadditive. On the other hand, it satisfies Shearer's inequality, because the cover is disjoint (see Proposition \ref{SI_for_disjoint}).

In example \ref{IR_and_SI_fail} we present a $\Z_3$-action such that topological entropy of a certain 
cover does not satisfy Shearer's inequality (it does not even satisfy the infimum rule, see below). 
On the other hand, it is known that topological entropy of a cover is subadditive. 
\end{proof}

%                   ---------------------------------  o  ---------------------------------

\section{The infimum rule}
Let $G$ be an amenable group and let $(F_n)$ be a selected F{\o}lner sequence. By $\card{F}$ we will denote the cardinality of $F$.
\begin{defn}
We will say that a nonnegative function $H$ on $\F(G)$ satisfies the \emph{infimum rule} if
\[
\limsup_{n\to\infty} \frac1{\card{F_n}} H(F_n) = \inf_{F\in\F(G)} \frac1{\card{F}}H(F).
\]
\end{defn}
\begin{defn}
We say that $H$ is \emph{$G$-invariant} if for any $g\in G$ it holds that $H(Fg)=H(F)$.
\end{defn}

\begin{prop}	\label{SI_then_IR}
If a nonnegative and $G$-invariant function $H$ on $\F(G)$ satisfies Shearer's inequality then it also obeys the infimum rule.
\end{prop}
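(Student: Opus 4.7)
The plan is to prove the two inequalities separately. The trivial direction is
\[
\limsup_{n\to\infty} \frac{1}{|F_n|} H(F_n) \;\geq\; \inf_{F\in\F(G)} \frac{1}{|F|} H(F),
\]
which follows immediately from the fact that each term $\frac{1}{|F_n|}H(F_n)$ is bounded below by the infimum.

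The substantive direction is the reverse inequality. The idea is to fix an arbitrary $F\in\F(G)$ and produce, for each $n$, a cover of $F_n$ by right translates of $F$, which will allow Shearer's inequality combined with $G$-invariance to control $H(F_n)$ in terms of $H(F)$. Concretely, I would observe that for any $h\in G$, the set of $g\in G$ with $h\in Fg$ is exactly $F^{-1}h$, which has cardinality $|F|$. Hence, setting $G_n = F^{-1}F_n$, the family
\[
\K_n = \{Fg : g\in G_n\}
\]
has the property that each $h\in F_n$ satisfies $F^{-1}h\subseteq G_n$, so $h$ belongs to exactly $|F|$ members of $\K_n$. Thus $\K_n$ is an $|F|$-cover of $F_n$, and Shearer's inequality together with $G$-invariance of $H$ gives
\[
H(F_n) \;\leq\; \frac{1}{|F|}\sum_{g\in G_n} H(Fg) \;=\; \frac{|F^{-1}F_n|}{|F|}\,H(F).
\]

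Dividing by $|F_n|$ yields
\[
\frac{H(F_n)}{|F_n|} \;\leq\; \frac{|F^{-1}F_n|}{|F_n|}\cdot\frac{H(F)}{|F|}.
\]
At this point the Følner property of $(F_n)$ enters: since $|f^{-1}F_n\triangle F_n|/|F_n|\to 0$ for every $f\in G$, summing over the finitely many $f\in F$ gives $|F^{-1}F_n\triangle F_n|/|F_n|\to 0$, and hence $|F^{-1}F_n|/|F_n|\to 1$. Passing to $\limsup$ we conclude
\[
\limsup_{n\to\infty} \frac{H(F_n)}{|F_n|} \;\leq\; \frac{H(F)}{|F|},
\]
and taking the infimum over $F\in\F(G)$ finishes the proof.

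The only genuinely nontrivial point is constructing the $|F|$-cover $\K_n$ and verifying its multiplicity; everything else (Shearer, $G$-invariance, Følner asymptotics) enters as a single line. The main obstacle, such as it is, is keeping track of left versus right translates so that $G$-invariance in the form $H(Fg)=H(F)$ matches the covering $h\in Fg\Leftrightarrow g\in F^{-1}h$ and the Følner condition used on $F^{-1}F_n$.
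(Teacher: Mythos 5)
Your proof is correct and follows essentially the same route as the paper: both construct the $|F|$-cover of $F_n$ by the right translates $Fg$ with $g\in F^{-1}F_n$, apply Shearer's inequality together with $G$-invariance to get $H(F_n)\le \frac{|F^{-1}F_n|}{|F|}H(F)$, and finish with the F\o lner estimate $|F^{-1}F_n|/|F_n|\to 1$. No substantive differences.
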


\begin{proof}
Clearly, $\limsup_{n\to\infty} \frac1{\card{F_n}} H(F_n) \geq \inf_{F\in\F(G)}\frac1{|F|} H(F)$. 
\medskip

For the converse inequality, fix an $F\in \F(G)$ and $\epsilon>0$. 
%Since $H$ is $G$-invariant, without loss of generality we may assume that the neutral element of $G$ belongs to $F$. 
For $n$ large enough the F{\o}lner set $F_n$ is $(F^{-1},\epsilon)$-invariant, i.e., it satisfies $\frac{\card{F_n\sd F^{-1}F_n}}{\card{F_n}}< \epsilon$. The family
$$
\K=\{Fg : g\in G, Fg\cap F_n\neq\emptyset\}
$$
is a $k$-cover of $F_n$ with $k = |F|$ (for $g\neq g'$, we treat the sets $Kg$ and $Kg'$ as different
elements of the $k$-cover, even if they are equal as sets). Indeed, for $f\in F_n$, the condition $f\in Fg$ can be written as $g\in F^{-1}f$ so, it is fulfilled for exactly $k$ elements $g$. By the same calculation, $Fg\cap F_n\neq\emptyset$ if and only if $g\in F^{-1}F_n$, so the cardinality of $\K$ equals that 
of $F^{-1}F_n$, i.e., it is not more than $|F_n|(1+\epsilon)$.
%On the other hand, $Fg\subset F_n$ is equivalent to $g\in\bigcap_{f\in F}f^{-1}F_n$ and the cardinality of such elements $g$ is at least $|F_n|(1 - \epsilon)$. This implies that all but at most $|F_n|(1-2\epsilon)$ elements $A$ of the $k$-cover are incomplete sets $Fg$. 
By invariance of $H$, $H(K) = H(F)$ for every $K\in\K$.
The Shearer's inequality now reads
$$
H(F_n)\le \frac1k \sum_{K\in\K}H(K) \le \frac1{|F|} |F_n|(1+\epsilon)H(F),
$$
which, after dividing by $|F_n|$ and passing with $n$ to infinity, ends the proof.
\end{proof}

\section{Shannon entropy}
Let $(X,\Sigma,\mu)$ be a probability space. A \emph{partition} of $X$ is a finite collection $\P$ 
of pairwise disjoint measurable sets such that $\bigcup_{A\in\P} A = X$. By a \emph{join} (or a \emph{common refinement}) of partitions $\P$ and $\P'$ we mean the partition $\P\vee\P'=\{A\cap B: A\in\P,\ B\in\P'\}$.  Now, let $G$ be an amenable group acting on $X$ via measurable maps, which preserve the measure $\mu$. If $F$ is a finite subset of $G$ we write $\P^F$ for the common refinement $\bigvee_{g\in F} g^{-1}\P$, where $g^{-1}\P=\{g^{-1}A: A\in \P\}$ . 
% tego nie u¿ywamy!
%We say that $\P'$ is finer tan $\P$ ($\P\preccurlyeq \P'$) if each element of $\P$ is a union of elements of $\P'$. 
Recall that the \emph{Shannon entropy} of a partition $\P$ is defined by
\[
H_\mu(\P) = -\sum_{A\in\P} \mu(A)\log\mu(A)
\]
and \emph{entropy of the action of $G$ with respect to a partition} $\P$ is defined as
\[
h_\mu(G,\P) = \limsup_{n\to\infty} \frac1{\card{F_n}}H_\mu\left(\P^{F_n}\right)
\]
($(F_n)$ is a F\o lner \sq\ in $G$).
\begin{prop}
The function $H(F)=H_\mu(\P^F)$ is
\begin{enumerate}
	\item nonnegative,
	\item $G$-invariant,
	\item monotone,
	\item strongly subadditive.
\end{enumerate}
\end{prop}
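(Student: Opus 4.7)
The plan is to verify the four properties in the order stated; the first three are essentially immediate from well-known facts about Shannon entropy, and only (4) carries real content.

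Nonnegativity is clear because every summand $-\mu(A)\log\mu(A)$ in the defining sum is nonnegative. For $G$-invariance, I would compute directly that $\P^{Fg}=\bigvee_{h\in Fg}h^{-1}\P=g^{-1}\bigvee_{f\in F}f^{-1}\P=g^{-1}\P^F$; since $\mu$ is $G$-invariant, the atoms of $g^{-1}\P^F$ have the same measures as those of $\P^F$, so their Shannon entropies coincide. Monotonicity follows because $F\subset F'$ implies that $\P^{F'}$ refines $\P^F$, and Shannon entropy is monotone under refinement (the standard Jensen argument using concavity of $-x\log x$).

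For (4) my main tools are two classical facts about Shannon entropy: the chain rule $H_\mu(\alpha\vee\beta)=H_\mu(\beta)+H_\mu(\alpha\mid\beta)$, with conditional entropy defined in the usual way; and monotonicity of conditional entropy under refinement of the conditioning partition, i.e.\ if $\gamma$ is coarser than $\beta$ then $H_\mu(\alpha\mid\gamma)\ge H_\mu(\alpha\mid\beta)$. This second fact is the only step that uses genuine content (again concavity of $-x\log x$), but it is a standard ingredient of every entropy textbook and therefore the place I expect to cite rather than rederive.

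Setting $\alpha=\P^F$, $\beta=\P^{F'}$, and $\gamma=\P^{F\cap F'}$, the identity $\P^{A\cup B}=\P^A\vee\P^B$ yields $\P^{F\cup F'}=\alpha\vee\beta$, while $F\cap F'\subset F'$ makes $\gamma$ coarser than $\beta$ (and, by symmetry, than $\alpha$). Chaining,
\[
H(F\cup F')-H(F')=H_\mu(\alpha\mid\beta)\le H_\mu(\alpha\mid\gamma)=H_\mu(\alpha)-H_\mu(\gamma)=H(F)-H(F\cap F'),
\]
which rearranges to (SS). The degenerate case $F\cap F'=\emptyset$ is absorbed by the convention $H(\emptyset)=0$ noted in the Remark (treating $\P^\emptyset$ as the trivial partition $\{X\}$). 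No real obstacle arises; once monotonicity of conditional Shannon entropy is in hand, the proof is purely bookkeeping with joins and intersections of index sets.
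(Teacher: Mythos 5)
Your proof is correct and takes essentially the route the paper intends: the paper gives no written argument, deferring to standard textbook facts and noting that (SS) is usually stated there as (MC) or (CS), and your derivation of (SS) from the chain rule plus monotonicity of conditional Shannon entropy in the conditioning partition is exactly the partition-level form of (MC) combined with the one-step passage to (SS). Nothing is missing.
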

These statements follow from standard properties of the entropy of a partition---for the proofs we refer the reader to any 
handbook on ergodic theory, ((SS) is usually replaced by (MC) or (CS), see e.g. \cite[(1.6.7) and (1.6.9)]{D}). 
%
% Monotonicity: $F\subset F'$ implies $\P^F \preccurlyeq \P^{F'}$ and $H_\mu(\P) \leq H_\mu(\P')$.
% Strong subadditivity: choosing a partition $\P$, arbitrary finite sets $F$, $F'$ and denoting $\A=\P^{F\setminus F'}$, $\B=\P^{F\cap F'}$, $\C=\P^{F'\setminus F}$ we derive with help of Jensen's inequality:
%\begin{multline*}
%H(F\cup F')+H(F\cap F') - H(F) - H(F') \\ = H(F|F') - H(F'\setminus F | F\cap F') = 
 %H_\mu(\C|\A\vee\B)-H_\mu(\C|\B) \\
%= -\sum_{A\in\A}\sum_{B\in\B}\sum_{C\in\C} \mu(A\cap B\cap C) \log \left(\frac{\mu(A\cap B \cap C)}{\mu(A\cap B)}\right) +\\  + \sum_{B\in\B}\sum_{C\in\C} \mu(B\cap C) \log \left(\frac{\mu(B \cap C)}{\mu(B)}\right) \\
%= -\sum_{A\in\A}\sum_{B\in\B}\sum_{C\in\C} \mu(A\cap B\cap C) \log\left(\frac{\mu(A\cap B \cap C)}{\mu(A\cap B)}\cdot \frac{\mu(B)}{\mu(B \cap C)}\right) \\
%= \sum_{A\in\A}\sum_{B\in\B} \mu(A\cap B)\sum_{C\in\C} \frac{\mu(A\cap B\cap C)}{\mu(A\cap B)} \log\left(\frac{\mu(A\cap B)\mu(B \cap C)}{\mu(A\cap B \cap C)\mu(B)}\right)\\
%\leq \sum_{A\in\A}\sum_{B\in\B} \mu(A\cap B) \log \left(\sum_{C\in\C} \frac{\mu(B \cap C)}{\mu(B)}\right) = 0
%\end{multline*}
%
In view of these facts, the following theorem follows from Propositions \ref{SS_then_SI} and \ref{SI_then_IR}.
\begin{thm}\label{infr}
Measure-theoretic entropy obeys the infimum rule, i.e. 
\[
h_\mu(G,\P) = \inf_{F\in\F(G)} \frac1{\card{F}}H_\mu(\P^F).
\]
for every $G$-invariant measure $\mu$ and every partition $\P$.
\end{thm}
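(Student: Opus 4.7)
The proof is a direct combination of the Proposition preceding the theorem with Propositions \ref{SS_then_SI} and \ref{SI_then_IR}, so the plan amounts to threading the hypotheses through the chain of implications already set up.

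First, I would record that, by the preceding Proposition, the function $H\colon \F(G)\to[0,\infty)$ defined by $H(F)=H_\mu(\P^F)$ is nonnegative, $G$-invariant, monotone, and strongly subadditive. These are precisely the four hypotheses required downstream, so no additional verification about partitions, joins or $g^{-1}\P$ is needed at this stage---it all sits inside the cited standard facts.

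Next, I would invoke Proposition \ref{SS_then_SI}: since $H$ is monotone, (SS) implies Shearer's inequality (Sh). Combining (Sh) with nonnegativity and $G$-invariance, Proposition \ref{SI_then_IR} applies and yields the infimum rule for $H$, namely
\[
\limsup_{n\to\infty}\frac{1}{|F_n|}H_\mu(\P^{F_n}) \;=\; \inf_{F\in\F(G)}\frac{1}{|F|}H_\mu(\P^F).
\]
Since, by the definition given just above the theorem, the left-hand side is exactly $h_\mu(G,\P)$, the desired equality follows.

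There is no real obstacle: the mathematical content has been absorbed into the two earlier propositions (the telescoping argument using conditional entropies that gives (SS)$\Rightarrow$(Sh), and the F\o lner-invariance computation that turns Shearer's inequality into the infimum rule). One mild point worth remarking is that the statement is silent about the choice of F\o lner \sq\ $(F_n)$, but this is automatic: the right-hand side infimum is intrinsic to $H$ and $G$, so its equality with $\limsup_{n}\frac{1}{|F_n|}H(F_n)$ forces the latter to be F\o lner-independent, and in particular coincides with $\liminf$ and $\lim$.
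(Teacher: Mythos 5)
Your proposal is correct and follows exactly the route the paper takes: cite the preceding Proposition for nonnegativity, $G$-invariance, monotonicity and strong subadditivity of $H(F)=H_\mu(\P^F)$, then chain Proposition \ref{SS_then_SI} and Proposition \ref{SI_then_IR}. The paper's own proof is just the one-line remark that the theorem follows from these propositions, so your write-up (including the observation about F\o lner-independence) is a faithful, slightly more explicit version of the same argument.
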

\begin{rem}
The above formula can be found, e.g. in \cite{S1}, where it is attributed to Kolmogorov.
\end{rem}

\begin{comment}

\begin{rem}
Let $H^*(n,\P)= \inf_{\card{F}=n} H_\mu(\P^F)$. Then 
\[
h_\mu(G,\P)= \inf_n \tfrac1n H^*(n,\P)
\]
and one can prove that the sequence $H^*(n,\P)$ is subadditive (see below). Hence the infimum over $n$
can be written as a limit or upper limit, according to preference.
\begin{proof}[Proof of subadditivity]
\begin{multline*}
H^*(n+m,\P) = \inf\{H(\P^F): \ \card{F}=n+m\}\\
= \inf\{H(\P^{F\cup E}): \ \card{F}=n,|E|=m,F\cap E=\emptyset\}\\
\le \inf\{H(\P^F)+H(\P^E): \ \card{F}=n,|E|=m,F\cap E=\emptyset\} \\
= \inf\{H(\P^F)+H(\P^E): \ \card{F}=n,|E|=m\},
\end{multline*}
where the last equality follows by invariance and the fact that any finite sets can be shifted 
to disjoint positions. Now, since the infimum involves two independent variables $F$ and $E$ and
two expressions each depending on only one of them, the infimum equals the sum of infima, which,
by definition, equals $H^*(n,\P)+H^*(m,\P)$.
\end{proof}
\end{rem}

\end{comment}

%                   ---------------------------------  o  ---------------------------------

\section{Beyond amenability}
The ``mindblowingly'' simple formula $\inf_{F\in\F(G)} \frac1{\card{F}}H_\mu(\P^F)$ can be applied to processes under actions of any countable groups. (It can be applied to uncountable groups as well, however, it will typically 
yield zero; such is the case of flows.) To distinguish from other existing notions, we will denote it by $h_\mu^*(G,\P)$. How good is this formula for countable non-amenable groups?
The answer depends on the properties we expect from a good notion of dynamical entropy.

The notion $h_\mu^*(G,\P)$ has the following advantages:
\begin{itemize}
	\item It is completely universal, can be defined for arbitrary groups.
	\item It is extremely simple, requires no details of the group (for instance in amenable groups it is 		
	formulated without referring to any F\o lner \sq).
	\item It satisfies \emph{the Bernoulli shifts postulate}: Bernoulli shifts have ``full'' entropy (equal 		to the Shannon entropy of the independent generator). 
	\item It has a very convincing interpretation for other processes (entropy is lost in finite-dimen\-sional dependencies and \emph{all such losses matter}).
\end{itemize}

Disadvantages can be detected by examining the action of the free group $F_2$ with two generators, and they
include:
\begin{itemize}
	\item It fails \emph{the factors postulate}: it can increase when passing to a factor.
	\item It fails \emph{the invariance postulate}: it can change with change of a generator (hence is not an isomorphism invariant). 
\end{itemize}

Before the examples, we recall the notions of the shift action and of a subshift.
Suppose $G$ is a group and $\Lambda$ is a finite set with the discrete topology. By the \emph{full shift} 
we understand the set $\Lambda^G$ (whose elements are $x=(x_g)_{g\in G}$) equipped with the product 
topology, on which $G$ acts by shifts: $(gx)_h = x_{hg}$. A \emph{subshift} is any closed $G$-invariant subset $X\subset\Lambda^G$. The full shift, as well as any subshift, admits a natural partition $\P_\Lambda=\{[a]:a\in\Lambda\}$ by closed-and-open cylinder sets determined by the symbol ``at zero'': $[a] = \{x:x_e = a\}$ ($e$ is the unity of $G$). We call it the \emph{time-zero partition} (or \emph{time-zero cover}, depending on the context). The term \emph{Bernoulli measure} is synonymous with a product measure $\nu^G$ on $\Lambda^G$, where $\nu$ is a probability measure on $\Lambda$.

\begin{exam}
Let $F_2$ denote the free group with two generators $a$ and $b$ and unity $e$, and consider $X=\{-1,1\}^{F_2}$ with 
the shift action, the Bernoulli $(\frac12,\frac12)$-measure, and the zero-coordinate partition $\P = \{[-1],[1]\}$. Clearly, $H(\P)=\log 2$ and $h_\mu^*(F_2,\P)=\log 2$. 
Next, consider the mapping $\psi:X\to\{-1,1\}\times\{-1,1\}$ given by 
$$
\psi(x) = (x(e)x(a), x(e)x(b))
$$
and the associated four-element partition $\mathcal R$. It is not hard
to see that the process generated by $\mathcal R$ is the $(\frac14,\frac14,\frac14,\frac14)$-Bernoulli shift: the one-dimensional distributions are independent. So, $H_\mu(\mathcal R)=\log 4$ and so equals $h_\mu^*(F_2,\mathcal R)$. On the other hand, the process generated by $\mathcal R$ is clearly a factor of that generated by $\P$. \footnote{The above example shows that the failure of the factors postulate is inevitable for any entropy notion satisfying the Bernoulli shifts postulate (in particular, for \emph{sofic entropy} \cite{LB}). For this reason it is commonly agreed to give up the factors postulate in the search for a universal (i.e., valid for a range of acting groups) notion of entropy.}
 
Now let $E = \{e,a,b\}\subset F_2$ and consider $\Q=\P^E$. Clearly, this partition is another generator
of the process generated by $\P$ (the generated processes are isomorphic). For any finite set $F\subset F_2$ we have $H(\Q^F)=H(\P^{EF}) = |EF|\log 2$. However, the ratio $\frac{|FE|}{\card{F}}$ does not drop below $2$ (and can be arbitrarily close to $2$). Hence $h_\mu^*(F^2,\Q)=2h_\mu^*(F^2,\P)=\log 4$. \footnote{The strength of the notion of \emph{sofic entropy} is that it behaves better in this aspect; it does not depend on the partition as long as it generates the whole process. So, sofic entropy can be viewed as a parameter associated to measure-preserving actions, and becomes an isomorphism invariant. On the other hand, sofic entropy has its disadvantages: it has a very complicated definition, and it applies to actions of sofic groups only.}
\end{exam}

So, either we accept $h_\mu^*(G,\P)$ as a parameter associated with a concrete \emph{process}, maintaining its simplicity and interpretation, or we try to force it to become an isomorphism invariant. As an attempt in this direction we propose two invariants, both equal to $h_\mu^*(G,\P)$ for actions of amenable groups. Unfortunately, we are unable to verify whether these new notions fulfill the Bernoulli shift condition in a more general case.

\begin{defn}\label{att}
$$
h_\mu^{**}(X,G) = \inf\{h_\mu^*(G,\P):\ \P \text{ is a generator}\},
$$
$$
h_\mu^{***}(X,G) = \inf\{H_\mu(\P):\ \P \text{ is a generator}\}.
$$
\end{defn}

Note that the latter notion has nothing to do with $h_\mu^*(G,\P)$, we were driven to it just by analogy to 
$h_\mu^{**}(X,G)$. For actions of amenable groups we know that $h_\mu^{**}(X,G)$ equals $h_\mu(X,G)$. We also 
have $h_\mu^{***}(X,G) = h_\mu(X,G)$ (see \cite[Corollary 2.7]{ST}). 

In the general case it is obvious that $h^{**}(X,G)\le h^{***}(X,G)$. B. Seward \cite{S2} can prove the opposite inequality for free actions
(i.e., such that for $g\neq e$, the set of points fixed by $g$ has measure $0$). As we already mentioned, we still do not know whether any of these notions satisfies the Bernoulli shifts postulate.\footnote{B. Weiss (\cite{W}) can prove the Bernoulli shifts postulate for $h^{***}(X,G)$ in actions of sofic groups, so $h^{***}(X,G)$ becomes a serious competition for sofic entropy. It is unknown whether these two notions coincide for actions of sofic groups.}

%                   ---------------------------------  o  ---------------------------------

\section{Topological entropy}	\label{top_entropy}
In the present section we enter the world of topological dynamical systems. We assume that $X$ is a compact metric space and $G$ acts by homeomorphisms on $X$. Similarly to the measure-theoretic case, for an open cover $\U$ and a finite $F\subset \N$ we write $\U^F$ for the refinement $\bigvee_{f\in F} f^{-1}\U$, where $f^{-1}\U=\{f^{-1}U: U\in \U\}$. We recall that $N(\U)$ is the smallest cardinality of a subcover chosen from a cover $\U$ and that topological entropy of a cover $\U$ is defined by $\Htop(\U)=\log(N(\U))$. Topological entropy of the action
is defined in two steps:
\begin{gather*}
\htop(T,\U) = \limsup_{n\to\infty} \frac1{|F_n|} \Htop(\U^{F_n}),\\
\htop(T) = \sup_{\U} \htop(T,\U),
\end{gather*}
where $(F_n)$ is a F\o lner \sq\ and the supremum is taken over all open covers of~$X$. 

We want to study the nonnegative function on $\F(G)$ obtained by fixing an open cover $\U$ and abbreviating $\Htop(\U^F)$ as $\Htop(F)$. It is obvious that $\Htop$ is monotone and $G$-invariant. It is also commonly known (and easily verified) that this function is subadditive (see e.g. \cite[(6.3.8)]{D}). The natural next step is the verification of Shearer's inequality. We begin with a discrete version of \cite[Thm. 2]{bato}. The proof is almost literally copied.

\begin{lem}\label{baba}
Let $\mathfrak X$ be a subset of $\Lambda^n$, where $\Lambda$ is a finite set and $n\in\N$. Let $\K$ be a $k$-cover
of the set of coordinates $\{1,2,\dots,n\}$ (the elements of $\K$ are nonempty subsets of $\{1,2,\dots,n\}$, we admit repeated elements in $\K$, and each coordinate belongs to at least $k$ elements of $\K$). For $K\in\K$ let $\mathfrak X_K$ denote the projection of $\mathfrak X$ onto the coordinates belonging to $K$. Then
$$
|\mathfrak X|\le \prod_{K\in\K} |\mathfrak X_K|^{\frac1k}.
$$
\end{lem}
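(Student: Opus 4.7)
The plan is to reduce the combinatorial statement to the entropy version of Shearer's inequality already established in Proposition \ref{SS_then_SI}. Endow $\mathfrak X$ with the uniform probability measure $\mu$. For each coordinate $i\in\{1,\ldots,n\}$ form the partition $\P_i=\{\{x\in\mathfrak X: x_i=a\}: a\in\Lambda\}$ of $\mathfrak X$, and for $F\subseteq\{1,\ldots,n\}$ set $\P^F=\bigvee_{i\in F}\P_i$. The atoms of $\P^F$ are in natural bijection with the projection $\mathfrak X_F$, so the function $H(F):=H_\mu(\P^F)$ on $\F(\{1,\ldots,n\})$ is nonnegative, monotone, and strongly subadditive by the standard properties of Shannon entropy recalled in the preceding section.

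By Proposition \ref{SS_then_SI}, this $H$ satisfies Shearer's inequality. Applied to the given $k$-cover $\K$ of the full index set $\{1,\ldots,n\}$, it yields
\[
H(\{1,\ldots,n\}) \le \frac1k\sum_{K\in\K} H(K).
\]
The left hand side equals $\log|\mathfrak X|$: since $\mu$ is uniform, every atom of $\P^{\{1,\ldots,n\}}$ is a singleton of mass $1/|\mathfrak X|$. For the right hand side, the push-forward of $\mu$ under the projection onto coordinates in $K$ is supported on $\mathfrak X_K$, so by the maximum entropy principle $H(K)\le\log|\mathfrak X_K|$. Substituting and exponentiating the resulting inequality
\[
\log|\mathfrak X| \le \frac1k\sum_{K\in\K}\log|\mathfrak X_K|
\]
gives the claim.

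There is no real obstacle to confront: the lemma is just the classical combinatorial formulation of the Shearer--Han entropy inequality, and all the machinery (strong subadditivity of Shannon entropy, the implication (SS)$\Rightarrow$(Sh)) has already been assembled. The only point worth flagging is that Proposition \ref{SS_then_SI} was stated for functions on $\F(G)$ with $G$ an \emph{abstract} set, so it applies verbatim with $G=\{1,\ldots,n\}$ and no dynamical structure is needed at this stage.
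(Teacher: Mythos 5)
Your proof is correct, but it takes a genuinely different route from the paper. The paper proves Lemma \ref{baba} directly, by induction on $n$: it splits $\mathfrak X$ according to the last coordinate, applies the inductive hypothesis to each slice, and recombines using the generalized H\"older inequality (this is the Bollob\'as--Thomason argument). You instead reduce the combinatorial statement to the entropy form of Shearer's inequality: uniform measure on $\mathfrak X$, coordinate partitions $\P_i$, strong subadditivity of Shannon entropy, then Proposition \ref{SS_then_SI}, and finally the two standard identifications $H(\{1,\dots,n\})=\log|\mathfrak X|$ and $H(K)\le\log|\mathfrak X_K|$. All of these steps are sound. The one point to be careful about is that the paper's proposition on Shannon entropy asserts strong subadditivity only for $H(F)=H_\mu(\P^F)$ with $\P^F=\bigvee_{g\in F}g^{-1}\P$, i.e.\ for translates of a \emph{single} partition under a group action, whereas you need it for joins of \emph{distinct} partitions $\P_i$ indexed by an abstract set; this more general statement is still an immediate consequence of the cited facts about conditional Shannon entropy, so it is not a gap, but you should say explicitly that you are invoking the general form rather than the proposition as stated. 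As for what each approach buys: the paper's induction is elementary and self-contained at the level of counting (no measure theory enters the topological-entropy section), while your argument is shorter given the machinery already assembled, and makes transparent why the combinatorial and measure-theoretic Shearer inequalities are the same statement.
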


\begin{proof}
For $n=1$ the statement is obvious: each $K\in\K$ equals $\{1\}$, each $\mathfrak X_K$ equals $\mathfrak X$ and the cardinality 
of $\K$ is at least $k$. 
We proceed by induction. Consider an $n\ge 2$ and suppose the statement holds for subsets of $\Lambda^{n-1}$. The set $\mathfrak X\subset\Lambda^n$ splits into disjoint sets $\mathfrak X^a$, $a\in\Lambda$, depending
on the value at the last coordinate:
$$
\mathfrak X^a = \{(x_1,x_2,\dots,x_n)\in \mathfrak X: x_n=a\}.
$$

For $K\in\K$ let $K^\circ = K\setminus\{n\}$ and let $\K^\circ = \{K^\circ:K\in\K\}$. 
Clearly, $\K^\circ$ is a $k$-cover of $\{1,2,\dots,n-1\}$. For every $a\in\Lambda$, $\mathfrak X^a$ can be 
viewed as a subset of $\Lambda^{n-1}$ (with the symbol $a$ appended to each element), hence, by 
the inductive assumption, we have
$$
|\mathfrak X^a|\le \prod_{K\in\K} |\mathfrak X^a_{K^\circ}|^{\frac1k}. 
$$
Because in $\mathfrak X^a$ the symbol at the last coordinate is determined, we have $|\mathfrak X^a_{K^\circ}|=|\mathfrak X^a_K|$ for every $K$, and we get
$$
|\mathfrak X^a|\le \prod_{K\in\K} |\mathfrak X^a_K|^{\frac1k}.
$$

Further, 
$$
|\mathfrak X| = \sum_{a\in\Lambda}|\mathfrak X^a|, \text{ \ and \ }|\mathfrak X_K| = \sum_{a\in\Lambda}|\mathfrak X^a_K|,
$$
for every $K\in\K$ such that $n\in K$. For $K$ not containg $n$ we will apply the estimate $|\mathfrak X^a_K|\le|\mathfrak X_K|$ (regardless of $a\in\Lambda)$.
And so, we have
\begin{multline}\label{here}
|\mathfrak X| = \sum_{a\in\Lambda}|\mathfrak X^a|\le \sum_{a\in\Lambda}\prod_{K\in\K}|\mathfrak X^a_K|^{\frac1k}=\sum_{a\in\Lambda}\Bigl(\prod_{K\ni n}|\mathfrak X^a_K|^{\frac1k}\cdot
\prod_{K\not\ni n}|\mathfrak X^a_K|^{\frac1k}\Bigr)\\
\le \prod_{K\not\ni n}|\mathfrak X_K|^{\frac1k} \cdot \sum_{a\in\Lambda}\prod_{K\ni n}|\mathfrak X^a_K|^{\frac1k}.
\end{multline}

For each $K$ containing $n$, on $\Lambda$ we define a function $f_K$, by $f_K(a) = |\mathfrak X^a_K|^{\frac1k}$, and then we 
apply the generalized H\"older inequality:
$$
\Bigl\|\prod_{K\ni n}f_K\Bigr\|_p \le \prod_{K\ni n}\Bigl\|f_K\Bigr\|_k,
$$
where $\frac1p=\sum_{K\ni n}\frac1k$. Because $\K$ is a $k$-cover, this sum has at least $k$ terms, hence $p\le 1$
(if $p<1$, formally, $\|\cdot\|_p$ is not a norm, but it does not matter). Since for a fixed finite-dimensional 
vector $f$, the term $\|f\|_p$ is a decreasing function of $p>0$, the above inequality holds also for $p=1$, and then it reads:
$$
\sum_{a\in\Lambda}\prod_{K\ni n}|\mathfrak X^a_K|^{\frac1k}\le \prod_{K\ni n}\Bigl(\sum_{a\in\Lambda}|\mathfrak X^a_K|\Bigr)^{\frac1k} = 
\prod_{K\ni n}|\mathfrak X_K|^{\frac1k}.
$$
Plugging this into \eqref{here} we end the proof.
\end{proof}

\begin{cor}	\label{SI_for_disjoint}
If $\U$ consists of pairwise disjoint sets then the corresponding function $\Htop$ on $\F(G)$ fulfills Shearer's inequality.
\end{cor}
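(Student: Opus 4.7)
The plan is to reduce everything to Lemma \ref{baba} by encoding atoms of $\mathcal{U}^F$ as strings over a finite alphabet indexing $\mathcal{U}$. Since $\mathcal{U} = \{U_a : a \in \Lambda\}$ consists of pairwise disjoint open sets covering $X$, each $U_a$ is clopen, and for any finite $F \subset G$ the refinement $\mathcal{U}^F$ is again a family of pairwise disjoint clopen sets. Consequently any subcover must contain \emph{every} nonempty atom of $\mathcal{U}^F$, so $N(\mathcal{U}^F)$ equals the number of nonempty atoms. Concretely, if one defines
\[
\mathfrak{X}_F = \Bigl\{(a_f)_{f\in F} \in \Lambda^F : \bigcap_{f\in F} f^{-1}U_{a_f} \neq \emptyset\Bigr\},
\]
then $H_{\mathsf{top}}(F) = \log |\mathfrak{X}_F|$.

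Next I would check that projections of $\mathfrak{X}_F$ behave correctly. For any $K \subset F$, if $(a_f)_{f\in F} \in \mathfrak{X}_F$, then the smaller intersection $\bigcap_{f\in K} f^{-1}U_{a_f}$ is also nonempty, so the projection of $\mathfrak{X}_F$ onto the coordinates in $K$ lands inside $\mathfrak{X}_K$. In particular $|(\mathfrak{X}_F)_K| \le |\mathfrak{X}_K|$.

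Now fix a $k$-cover $\mathcal{K}$ of $F$. After relabeling $F$ as $\{1,2,\dots,n\}$ (the combinatorial lemma is stated for such index sets, but is plainly label-invariant), apply Lemma \ref{baba} to $\mathfrak{X}_F \subset \Lambda^F$ with this $k$-cover to obtain
\[
|\mathfrak{X}_F| \le \prod_{K\in\mathcal{K}} |(\mathfrak{X}_F)_K|^{1/k} \le \prod_{K\in\mathcal{K}} |\mathfrak{X}_K|^{1/k}.
\]
Taking logarithms yields $H_{\mathsf{top}}(F) \le \frac{1}{k}\sum_{K\in\mathcal{K}} H_{\mathsf{top}}(K)$, which is Shearer's inequality.

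The only conceptual step is the first one: exploiting disjointness to identify $N(\mathcal{U}^F)$ with the cardinality of $\mathfrak{X}_F$. Without disjointness this identification fails (a strict subcover may suffice), which is exactly why the counterexamples in Example \ref{golden_mean} and Example \ref{IR_and_SI_fail} can exist. Once the identification is in place, everything reduces to a direct application of Lemma \ref{baba}, and no further obstacle arises.
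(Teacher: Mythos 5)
Your proposal is correct and follows essentially the same route as the paper: identify the nonempty atoms of $\U^F$ with a subset $\mathfrak X_F\subset\Lambda^F$, note that disjointness forces $N(\U^F)=|\mathfrak X_F|$, and apply Lemma \ref{baba}. (Your version is if anything slightly more careful, since you only use the inclusion of the projection $(\mathfrak X_F)_K$ into $\mathfrak X_K$ rather than the equality the paper asserts, and the inclusion suffices.)
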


\begin{proof} Because the cover $\U$ is by disjoint sets, for each $F\in\F(G)$ we have
$N(\U^F) = |\U^F|$, where $\U^F$ is rid of empty elements. Labeling $\U$ by elements 
of a finite alphabet $\Lambda$ (of the same cardinality as $\U$), $\U^F$ can be identified 
with a set $\mathfrak X\subset\Lambda^F$, while for any $K\subset F$ we have $\U^K = \mathfrak X_K$.
Now, $\Htop(F)=\log|\mathfrak X|$ and $\Htop(K) = \log|\mathfrak X_K|$ and the assertion follows directly 
from Lemma \ref{baba}.
\end{proof}

\begin{cor}
If $(X,G)$ is a subshift and $\U=\P_\Lambda$ then the infimum rule holds 
for $\Htop$, i.e.,
\[
\htop(T)=\htop(T,\U) = \inf_{F\in\F(G)} \frac1{\card{F}} \Htop(\U^F).
\]
\end{cor}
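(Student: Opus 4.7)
The plan is to assemble the corollary from pieces already in place: the second equality is a direct application of the infimum-rule machinery, and the first equality is the familiar fact that the time-zero cover is a topological generator for a subshift.

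First I would record that for a subshift $X\subset\Lambda^G$ the time-zero cover $\U=\P_\Lambda=\{[a]:a\in\Lambda\}$ consists of pairwise disjoint clopen cylinders. Hence Corollary \ref{SI_for_disjoint} applies and says that the function $F\mapsto\Htop(\U^F)$ on $\F(G)$ satisfies Shearer's inequality. This function is visibly nonnegative and $G$-invariant (a shift by $g$ only relabels a minimal subcover of $\U^F$ as a minimal subcover of $\U^{Fg}$). Therefore Proposition \ref{SI_then_IR} applies and yields
\[
\htop(T,\U)=\limsup_{n\to\infty}\tfrac1{|F_n|}\Htop(\U^{F_n})=\inf_{F\in\F(G)}\tfrac1{|F|}\Htop(\U^F),
\]
which is the second equality in the statement.

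For the first equality $\htop(T)=\htop(T,\U)$, I would argue that $\U$ is a topological generator. The finite-intersection cylinders $\bigcap_{g\in F}g^{-1}[a_g]$ form a basis of the product topology on $\Lambda^G$ and hence on $X$. Thus, given an arbitrary open cover $\V$ of $X$, compactness provides a finite $F\in\F(G)$ such that $\U^F$ refines $\V$. Monotonicity of $\Htop$ with respect to refinement gives $\Htop(\V^{F_n})\le\Htop(\U^{FF_n})$ for every $n$. Dividing by $|F_n|$, using $|FF_n|/|F_n|\to 1$ (a consequence of the F\o lner property) together with subadditivity, one obtains the standard identity $\htop(T,\U^F)=\htop(T,\U)$, so $\htop(T,\V)\le\htop(T,\U)$. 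Taking supremum over $\V$ gives $\htop(T)=\htop(T,\U)$.

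There is really no serious obstacle; the statement is packaging. The only mildly delicate point is the F\o lner-averaging argument establishing $\htop(T,\U^F)=\htop(T,\U)$, which I would either cite from the standard theory of topological entropy for amenable group actions or verify directly from $|FF_n\sd F_n|/|F_n|\to 0$ together with subadditivity of $\Htop$.
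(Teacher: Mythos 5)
Your proof of the second equality is exactly the paper's argument: disjointness of $\P_\Lambda$ gives Shearer's inequality via Corollary~\ref{SI_for_disjoint}, and then Proposition~\ref{SI_then_IR} yields the infimum rule. The paper's proof is literally just that citation and leaves the first equality $\htop(T)=\htop(T,\U)$ implicit as the standard generator fact for subshifts, which you correctly spell out; so your proposal is correct and takes essentially the same approach.
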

\begin{proof}
Follows immediately from Propositions \ref{SI_for_disjoint} and \ref{SI_then_IR}.
\end{proof}

The following example shows that \tl\ entropy is not in general strongly subadditive, even for subshifts. 
To make matters worse, in another example we will present a system and a (non-disjoint) cover such that both Shearer's inequality and infimum rule fail.

\begin{exam}	\label{golden_mean}
The golden mean shift is a subshift $X\subset \{0,1\}^\Z$ consisting of all sequences in which block $11$ does not occur. Let $\U=\P_\Lambda$ be the two-element time-zero cover. It was shown already that Shearer's inequality holds, but it can easily be checked that
\begin{multline*}
\Htop(\{-1,0,1\}) + \Htop(\{0\}) = \log 5 + \log 2 \geq \log 3 + \log 3 \\
= \Htop(\{-1,0\}) + \Htop(\{0,1\}),
\end{multline*}
i.e., strong subadditivity does not hold.
\end{exam}

The following example looks innocent, but it took us a lot of effort to find it.

\begin{exam}	\label{IR_and_SI_fail}
Let $\Lambda=\{a,b,c\}$. We consider the action of the group $\Z_3=\Z/3\Z = \{0,1,2\}$, by shifts, on a space $X\subset \Lambda^{\Z_3}$ (i.e., a subshift) defined by
\[
X=\{(a,a,a), \ (b,b,b), \ (c,c,c), \ (a,b,c), \ (b,c,a), \ (c,a,b)\}.
\]
(In fact, $(c,c,c)$ can be dropped and the example with 5 elements will still work.) It is clear that $X$ is closed and $\Z_3$-invariant. The collection $\V=\{\{a,b\},\{b,c\},\{a,c\}\}$ is a (non-disjoint) cover of $\Lambda$. Let $\bar\V = \{\bar V:V\in\V\}$, where $\bar V= V\times\Lambda\times\Lambda$. Clearly, $\bar\V$ is a cover of $\Lambda^{\Z_3}$ hence also of $X$. The elements of $\bar\V^{\Z_3}$ have the form $V_1\times V_2\times V_3$, where $V_1,V_2,V_3$ are (not necessarily distinct) elements of $\V$. It is easy to check that $X$ admits a subcover of $\bar\V^{\Z_3}$ consisting of three sets, namely
\[
\{a,b\}\times\{a,b\}\times\{a,b\},\ \ \ \{a,c\}\times\{b,c\}\times\{a,c\},\ \ \ \{b,c\}\times\{a,c\}\times\{a,b\}.
\]
On the other hand, there is no subcover with two elements (one set of the form $V_1\times V_2\times V_3$ may contain at most two elements of $X$, because in any three of them, on some coordinate there
appear all three letters). Thus $N(\bar\V^{\Z_3})=3$ and 
$$
\htop(\Z_3,\bar\V)=\frac1{|\Z_3|}\Htop(\bar\V^{\Z_3}) = \frac13\log3
$$ 
(in any finite group $G$, for any F\o lner \sq, eventually $F_n=G$). However, the cover $\bar\V^{\{0,1\}}$ has a minimal subcover consisting of only two sets (for example $\{b,c\}\times\{a,c\}$ and $\{a,b\}\times\{a,b\}$), and $\Htop(\V^{\{0,1\}})=\log2$. Since $\frac12\log 2<\frac13\log 3$, the infimum rule does not hold. Consequently, by Proposition \ref{SI_then_IR} (or by direct verification for the 2-cover of $\Z_3$ by $\{0,1\}, \{1,2\}, \{0,2\}$), Shearer's inequality fails as well.
\end{exam}

\begin{rem}
Shearer's inequality depends only vaguely on the acting group. The example (as a counterexample for Shearer's inequality) can be easily adapted to the action of any group with at least 3 elements, in particular of $\Z$. The infimum rule depends more heavily on the acting 
group (its proof uses only $k$-covers obtained by shifting one set $F$ of cardinality $k$), so the following question arises:
\end{rem}

\begin{que}
Let $G$ be an infinite countable amenable group acting on a compact space $X$ and let $\U$ be an open cover of $X$. 
Does the infimum rule hold for $\Htop(F)=\Htop(\U^F)$, i.e., is it true that $\htop(G,\U)=\htop^*(G,\U)$? \end{que}

In spite of many efforts, we have not succeeded in answering this questions even for $G=\Z$. In fact, we do not even know toward which answer should we incline. Let us discuss the difficulties more extensively.

Our example for $\Z_3$ works only because the two-element set $\{0,1\}$ and its shifts form a \emph{non-splitting} 2-cover of $\Z_3$.
In $\Z$, an analogous 2-cover is splitting (i.e., it splits as a union of two $1$-covers) and it is easy to show, using plain subadditivity, that for a 
splitting $k$-cover of $G$, obtained by shifting one set $F$, the Shearer's inequality nearly holds (up to a small error) on 
large elements of the F\o lner \sq.  This suffices to prove the infimum rule. The simplest finite subset of $\Z$ whose shifts 
produce a non-splitting 3-cover is $\{0,1,3\}$, but we failed to find a counterexample for Shearer's inequality, based on 
any 3-cover. The point is that the key inequality $\frac1k\log k\le\frac1{k+1}\log(k+1)$ does not hold for $k>2$.
\medskip

Nevertheless, we are able to formulate a positive result. In its proof we benfit from the fact that the infimum rule works for the Shannon entropy and we apply the variational principle (for amenable groups it was first proved in \cite{STZ}). Let us recall: if $\M_G(X)$ is the collection of all $G$-invariant probability measures on $X$ then $\htop(G)=\sup_{\mu\in\M_G(X)} h_\mu(G)$. It suffices to take the supremum over ergodic measures.

\begin{thm}
Let $X$ be a compact metric space and $G$ a countable amenable group acting on $X$ by continuous maps. Define
\begin{gather*}
\htop^*(G,\U) = \inf_F \frac1{\card{F}} \Htop(\U^F)\\
\htop^*(G)=\sup_{\U} \htop^*(G,\U)
\end{gather*}
Then
\[
\htop^*(G)=\htop(G)
\]
\end{thm}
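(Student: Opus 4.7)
The inequality $\htop^*(G)\le\htop(G)$ is immediate: for every open cover $\U$, the $\limsup$ along any F\o lner \sq\ is at least the infimum over all $F\in\F(G)$, so $\htop(G,\U)\ge\htop^*(G,\U)$, and taking suprema over $\U$ gives the bound. The substance of the theorem is the reverse inequality.

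My plan is to combine the variational principle recalled above with Theorem \ref{infr} via a single uniform finitary comparison between Shannon and topological entropy. Fix $\epsilon>0$, choose an ergodic $\mu\in\M_G(X)$ with $h_\mu(G)>\htop(G)-\epsilon$, and a finite measurable partition $\P=\{A_1,\dots,A_r\}$ with $h_\mu(G,\P)>h_\mu(G)-\epsilon$. By outer regularity of $\mu$, for a small parameter $\eta>0$ (to be fixed later) pick open sets $U_i\supset A_i$ with $\mu(U_i\setminus A_i)<\eta$; since $\{A_i\}$ already covers $X$, $\U=\{U_1,\dots,U_r\}$ is an open cover of $X$.

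The combinatorial heart of the argument is the estimate
\[
H_\mu(\P^F)\le\Htop(\U^F)+|F|\,\delta(\eta), \qquad F\in\F(G),
\]
with $\delta(\eta)\to 0$ as $\eta\to 0$ (explicitly $\delta(\eta)=h(r\eta)+r\eta\log r$, where $h$ is the binary entropy). To derive it, select a minimal subcover of $\U^F$, labeled by a set $T\subset\{1,\dots,r\}^F$ with $|T|=N(\U^F)$, and linearly order $T$ to turn this subcover into a measurable partition $\tilde\P$ with at most $|T|$ atoms; then $H_\mu(\tilde\P)\le\log|T|=\Htop(\U^F)$. On the atom of $\tilde\P$ with ``name'' $s\in T$, the $\P^F$-name of a point can differ from $s$ at a coordinate $g\in F$ only when $gx$ lies in the exceptional set $B=\bigcup_i (U_i\setminus A_i)$, which has $\mu(B)\le r\eta$; encoding this deviation coordinate by coordinate and applying the standard bound on the entropy of a distribution on $\{0,1,\dots,r\}$ whose mass on nonzero symbols is at most $r\eta$ bounds $H_\mu(\P^F\mid\tilde\P)$ by $|F|\,\delta(\eta)$, after which $H_\mu(\P^F)\le H_\mu(\tilde\P)+H_\mu(\P^F\mid\tilde\P)$ closes the estimate.

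With the lemma in hand, divide by $|F|$ and take the infimum over $F\in\F(G)$. The infimum rule of Theorem \ref{infr} identifies $\inf_F|F|^{-1}H_\mu(\P^F)$ with $h_\mu(G,\P)$, so $h_\mu(G,\P)\le\htop^*(G,\U)+\delta(\eta)$; choosing $\eta$ so that $\delta(\eta)<\epsilon$ yields $\htop^*(G)\ge\htop^*(G,\U)>\htop(G)-3\epsilon$, and letting $\epsilon\to 0$ completes the proof. The main obstacle is precisely the per-$F$ nature of the key estimate: standard proofs of the variational principle deliver only asymptotic comparisons along F\o lner \sq s, whereas the infimum rule demands a bound valid for every finite set simultaneously, and this uniformity is exactly what the combinatorial construction above supplies.
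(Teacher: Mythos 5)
Your proof is correct, and while it shares the paper's overall skeleton (the easy inequality, then the variational principle plus Theorem \ref{infr} reduced to a comparison $H_\mu(\P^F)\le\Htop(\U^F)+|F|\cdot o(1)$ valid for \emph{every} finite $F$ simultaneously), the way you obtain that comparison is genuinely different and noticeably simpler. The paper builds the cover from compact subsets $B_i\subset A_i$, passes to the auxiliary partition $\xi=\{B_0,B_1,\dots,B_p\}$, and then invokes the pointwise ergodic theorem along a tempered F\o lner sequence to produce a set $Y$ with $\mu(Y)\ge 1-\sqrt\epsilon$ on which each element of a minimal subcover of $\U^F$ meets at most $\sum_{i\le|F|\sqrt\epsilon}\binom{|F|}{i}$ atoms of $\xi^F$; this is what beats the naive $2^{|F|}$ count (which would only give the useless per-coordinate error $\log 2$), at the cost of requiring ergodic measures, tempered F\o lner sequences, conditional measures on $Y$, and several bookkeeping terms. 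You instead disjointify a minimal subcover of $\U^F$ into a partition $\tilde\P$ with $H_\mu(\tilde\P)\le\log N(\U^F)=\Htop(\U^F)$ and bound $H_\mu(\P^F\mid\tilde\P)$ by $\sum_{g\in F}H_\mu(Z_g)$, where $Z_g$ records the deviation of the $\P$-name from the subcover label at coordinate $g$; since $\{Z_g\ne 0\}\subset g^{-1}B$ with $\mu(B)<r\eta$, each summand is at most $h(r\eta)+r\eta\log r$. Here subadditivity of Shannon entropy does the work that the ergodic theorem does in the paper: only $G$-invariance of $\mu$ is used, with no ergodicity and no conditioning on a good set. If you write this up, the points worth spelling out are the refinement relation $\P^F\preceq\tilde\P\vee\bigvee_{g\in F}\mathcal Z_g$ (the $\tilde\P$-atom gives the label $s$, and $Z_g$ then recovers the true $\P$-symbol at $g$), the observation that $t_g\ne s_g$ forces $gx\in U_{s_g}\setminus A_{s_g}\subset B$ because the sets $A_i$ are pairwise disjoint, and the fact that $p\mapsto h(p)+p\log r$ is increasing on $[0,r/(r+1)]$ so the bound $\delta(\eta)$ is legitimate for small $\eta$. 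Both proofs then conclude identically via Theorem \ref{infr} and the variational principle.
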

\begin{proof}
Clearly, $\htop^*(G)\leq \htop(G)$.

To obtain the converse inequality, consider an ergodic measure $\mu$ on $X$, a finite partition $\P=\{A_1,...,A_p\}$ of $X$, and fix an $\epsilon>0$. For any $\delta >0$ one can choose compact sets $B_i \subset A_i$, $i=1,...,p$, so that $\mu(A_i \setminus B_i)< \delta/p$. Additionally, let $B_0=X \setminus \bigcup_{i=1}^p B_i$ and define $U_i=B_0\cup B_i$, $i=1,...,p$. Then $\U=\{U_1,...,U_p\}$ is a cover with $U_i\supset A_i$, $\mu(U_i\setminus A_i)<\delta$. The family $\xi=\{B_0,B_1,...,B_p\}$ is a partition of $X$. For any $F\in\F(G)$, we have 
$$
H_\mu(\P^F) \leq H_\mu(\P^F \vee \xi^F) = H_\mu(\P^F | \xi^F)+H_\mu(\xi^F)\le |F|H_\mu(\P|\xi)+H_\mu(\xi^F)
$$ 
(we have used subadditivity of the function $H(F)=H_\mu(\P^F|\xi^F)$,
see e.g. \cite[(1.6.33)]{D}. Choosing $\delta$ appropriately small, we may demand that $H_\mu(\P | \xi) < \epsilon$, so that
\begin{equation}\label{eq1}
H_\mu(\P^F) \leq H_\mu(\xi^F) + |F|\epsilon.
\end{equation}

Let $(F_n)$ be a \emph{tempered} F\o lner \sq\ (we skip the definition, every
F\o lner \sq\ has a tempered sub\sq, the ergodic theorem holds along any tempered F\o lner \sq, see \cite{L}).
By the ergodic theorem, for almost all $x$ it holds that 
\[
\lim_{n\to\infty} \frac1{\card{F_n}}\card{\{g\in F_n:gx\in B_0\} }=\mu(B_0) < \epsilon.
\]
The same is true for $f^{-1}B_0$ ($f\in F$), which implies that for almost all $x$,
\[
\lim_{n\to\infty} \frac1{\card{F_n}}\card{\{g\in F_n:fgx\in B_0\} } < \epsilon
\]
for all $f\in F$.
Therefore, we can choose $N\in \N$ such that the set 
\[
X_{\epsilon,N}=\bigcap_{n\geq N} \bigcap_{f\in F} \left\{x\in X : \frac1{\card{F_n}}\card{\{g\in F_n: fgx\in B_0 \}} <\epsilon \right\}
\]
has positive measure. 
For $x\in X_{\epsilon,N}$ and $n\geq N$ we have
\[
\card{\left\{(g,f): f\in F,\ g\in F_n, \ fgx\in B_0 \right\}} < \card{F_n}\cdot \card{F} \cdot \epsilon,
\]
which implies
\begin{equation} \label{rect_rule}
\card{\left\{g\in F_n: \card{\left\{f\in F:fgx\in B_0\right\}} > \card{F}\sqrt{\epsilon}\right\}} \leq \card{F_n}\sqrt{\epsilon}.
\end{equation}

Consider a collection of all sets of the form $\bigcap_{f\in F} f^{-1}C_f$, where $C_f=B_0$ or $C_f=X\setminus B_0$, such that the first case occurs for at most $\card{F}\sqrt{\epsilon}$ indices $f$. Let $Y$ be the union of all such sets. Note that $Y$ is a union of elements of the partition $\xi^F$.
The ratios $\frac1{\card{F_n}} \left|\{g\in F_n: g x \in Y\}\right|$ converge to $\mu(Y)$ for $\mu$-almost every $x$. On the other hand, $g x \in Y$ is equivalent to the fact that $fgx\in B_0$ for at most $\card{F}\sqrt{\epsilon}$ elements $f\in F$. If, in addition, we pick $x \in X_{\epsilon,N}$ then (\ref{rect_rule}) implies that for every $n\ge N$,
\[
\frac1{\card{F_n}} \card{\{g\in F_n: g x \in Y\}}\geq 1-\sqrt{\epsilon}.
\]
Since $X_{\epsilon,N}$ has positive measure, it contains a point which fulfills the ergodic theorem for $Y$, implying that $\mu(Y) \geq 1-\sqrt\epsilon$.

Let $\U'$ be a subcover of $\U^F$ with minimal cardinality. 
%Let $\U''\subset \U'$ consist of the sets needed to cover $Y$. 
Denoting by $\xi^F_Y$ the collection $\{B \in \xi^F: B  \cap Y \neq\emptyset\}$ (note that we can as well write $B \subset Y$, so
$\xi^F_Y$ is a partition of $Y$), we obviously have
\[
\card{\xi^F_Y} \leq \sum_{U\in \U'} \card{\{B\in\xi^F_Y: B\cap U\not=\emptyset\}}.
\]
Fix some $U\in\U'$, $U=\bigcap_{f\in F} f^{-1}U_{j_f}$, $1\leq j_f \leq p$. Consider a $B\in\xi^F_Y$ satisfying 
$B\cap U\not=\emptyset$. Then $B$ is a cylinder in $\xi^F$, $B = \bigcap_{f\in F} f^{-1}B_{k_f}$, $0\leq k_f\leq p$, and because $B$ is 
contained in $Y$, $k_f=0$ may occur for not more than $\card{F}\sqrt{\epsilon}$ indices $f\in F$. 
If $k_f\neq 0$ then it must be equal to $j_f$, because otherwise $B_{k_f}$ and $U_{j_f}$ would be
disjoint, implying $B\cap U=\emptyset$. Therefore, for given $U$ the number of sets $B\in\xi^F_Y$ intersecting $U$ is estimated
by the number of ways in which the (few) indices $0$ can be distributed over the $k_f$'s, i.e., 
\[
\card{\{B\in\xi^F_Y: B\cap U\not=\emptyset\}} \leq \sum_{i=0}^{\lfloor\card{F}\sqrt{\epsilon}\rfloor} {\card{F} \choose i}.
\]
It is well known that $\log {n\choose k} \leq n H(\frac k n)$, where
$H(\delta)$ is the binary entropy of the vector $(\delta,1-\delta)$, so we get 
\begin{equation}\label{eq2}
\log\card{\xi^F_Y} \leq \Htop(\U^F) + \card{F} H(\sqrt{\epsilon})+\log\card{F}\sqrt{\epsilon}.
\end{equation}
\smallskip

We still need to compare the partition $\xi^F_Y$ with $\xi^F$. Let $\R$ be the partition $\{Y,X\setminus Y\}$ and let $\mu_Y$ be the normalized conditional measure induced by $\mu$ on $Y$.
We have
\begin{multline*}
H_\mu(\xi^F)\le H_\mu(\xi^F\vee\R) = H_\mu(\xi^F|\R) + H(\R) \\
= \mu(Y)H_{\mu_Y}(\xi^F_Y) + \mu(X\setminus Y) H_{\mu_{X\setminus Y}}(\xi^F) + H(\R)\\
\le H_{\mu_Y}(\xi^F_Y) + \sqrt\epsilon \cdot \log |\xi^F| + H(\sqrt\epsilon).
\end{multline*}
Eventually, combining the above with \eqref{eq1} and \eqref{eq2} (and the standard estimate of entropy by means of cardinality), we get
$$
H_\mu(\P^F)\le  
\Htop(\U^F) + \card{F} H(\sqrt{\epsilon})+\log\card{F}\sqrt{\epsilon} + \sqrt\epsilon \cdot \log |\xi^F| + H(\sqrt\epsilon) + |F|\epsilon.
$$
Dividing both sides by $|F|$ and noticing that all but the first term on the right can be made arbitrarily small, regardless of $F$, 
by the choice of $\epsilon$, next taking infima over $F$ on both sides, and supremum over $\U$ on the right (which eliminates the small
error terms), we arrive to 
$$
\inf_F\frac1{|F|}H_\mu(\P^F)\le \htop^*(G).
$$
Now Theorem \ref{infr} allows to replace the left hand side by $h_\mu(G,\P)$. Since the inequality holds for any $\P$ and any ergodic $\mu$, taking appropriate suprema (and applying the variational principle), we can further replace the left hand side by $\htop(G)$, concluding the proof.
\end{proof}

%                   ---------------------------------  o  ---------------------------------

\bibliographystyle{amsplain} % global bibliography

\bigskip\noindent
\address{Institute of Mathematics, Polish Academy of Science, Sniadeckich 8, 00-656 Warsaw, Poland}

\noindent
\email{downar@pwr.edu.pl}

\smallskip\noindent
\address{Departament of Mathematics, Wroclaw University of Technology, Wybrzeze Wyspianskiego 27,
50-370 Wroclaw, Poland}

\smallskip\noindent
\email{frej@pwr.edu.pl}

\smallskip\noindent
\address{Departamento de Matem\'aticas, Facultad Ciencias Exactas, Universidad Andres Bello, Rep\'ublica 252, Santiago, Chile}

\smallskip\noindent
\email{promagnoli@unab.cl}

\end{document}